\documentclass[11pt]{amsart}

\usepackage[T1]{fontenc}
\usepackage[utf8]{inputenc}
\usepackage{lmodern}
\usepackage{microtype}
\usepackage{amsmath,amssymb,mathtools}
\usepackage{enumitem}
\usepackage{booktabs}
\usepackage{xcolor}
\usepackage{array}
\usepackage{hyperref}
\usepackage[nameinlink,capitalize]{cleveref}

\hypersetup{
  colorlinks=true,
  linkcolor=blue!55!black,
  citecolor=blue!55!black,
  urlcolor=blue!55!black
}

\newtheorem{theorem}{Theorem}[section]
\newtheorem{proposition}[theorem]{Proposition}
\newtheorem{lemma}[theorem]{Lemma}

\theoremstyle{definition}

\theoremstyle{remark}

\newtheorem{question}[theorem]{Question}

\crefname{theorem}{Theorem}{Theorems}
\crefname{proposition}{Proposition}{Propositions}
\crefname{lemma}{Lemma}{Lemmas}
\crefname{corollary}{Corollary}{Corollaries}
\crefname{remark}{Remark}{Remarks}
\crefname{question}{Question}{Questions}

\newcommand{\Z}{\mathbb Z}

\newcommand{\Aut}{\operatorname{Aut}}
\newcommand{\Inn}{\operatorname{Inn}}
\newcommand{\Out}{\operatorname{Out}}

\newcommand{\rk}{\operatorname{rk}}

\newcommand{\ab}{\mathrm{ab}}

\title{A Torsion-free Supersoluble Group with Trivial Outer Automorphism Group}

\author{Mattia Brescia --- Ernesto Ingrosso --- Marco Trombetti}

\subjclass[2020]{Primary 20F28; Secondary 20F16}
\keywords{outer automorphism, supersoluble group, polycyclic group}

\begin{document}

\begin{abstract}
We give a negative solution to Problem~13.23 of the Kourovka Notebook.
We construct a torsion-free group $G$ of Hirsch length $14$ admitting a
finite series
\[
 1=G_0\triangleleft G_1\triangleleft\cdots\triangleleft G_{14}=G
\]
in which every $G_i$ is normal in $G$ and every factor is infinite
cyclic, but such that $\Out(G)=1$.
\end{abstract}

\maketitle

\section{Introduction}

The existence of non-inner automorphisms is one of the oldest recurring
questions in group theory.  Even within soluble and nilpotent classes the
answer heavily depends on the interaction between the centre, the
abelianization, and the integral structure of the group.  Robinson
constructed infinite soluble groups with no outer automorphisms
\cite{Robinson1980}.  However, he did not settle the much
more restrictive situation in which the group has a normal series with infinite cyclic factors.

This latter question was posed by F.~de Giovanni as Problem~13.23 in the
thirteenth issue of the Kourovka Notebook:
\begin{quote}
Let the group $G$ have a finite normal series with infinite cyclic
factors (containing, of course, $G$ and $\{1\}$).  Is it true that $G$
has a non-trivial outer automorphism?
\end{quote}
The problem, first published in 1995, is still listed without a solution
in the twenty-first issue of the Notebook \cite[Problem~13.23]{Kourovka21}.
The requirement that every member of the series be normal in the whole
group is essential.  It is strictly stronger than merely asking that
$G$ be poly--$\Z$, and it makes the group torsion-free and supersoluble.
For brevity, we call such a group \emph{normally poly--$\Z$}.

The closest previous work is due to Menegazzo and Puglisi
\cite{MenegazzoPuglisi}.  If
\[
 \rho_0(G)=\rk_{\Z}(G/G')
\]
denotes the torsion-free rank of the abelianization, they proved that a
torsion-free supersoluble group has a non-inner automorphism whenever
$\rho_0(G)\geq 2$.  They also constructed torsion-free supersoluble
groups with $\Aut(G)=\Inn(G)$ and $\rho_0(G)=0$.  Their methods leave
precisely the intermediate case $\rho_0(G)=1$ unsolved.  Moreover, if
a torsion-free supersoluble group with infinite abelianization satisfies
$\Aut(G)=\Inn(G)$, then
\[
 Z(G)\cong\Z,\qquad Z(G)\cap G'=1,\qquad \rho_0(G)=1
\]
\cite[Theorem~2]{MenegazzoPuglisi}.  Thus a counterexample to Kourovka
Problem~13.23 must lie exactly on this narrow boundary.

The group constructed here does so.  Our main result is the following.

\medskip

\noindent{\bf Main Theorem}\quad
There exists a torsion-free group with a finite normal series with
infinite cyclic factors, such that $\Out=1$. Moreover $G$ has Hirsch
length $14$, $Z(G)\cong\Z$ and $G_{\ab}\cong \Z\oplus (\Z/2\Z)^6$.

\medskip

The group is built by adding one generator $t$ to $N$ and imposing the rule
\[
G=\langle t\rangle\ltimes_{\alpha}N,
\qquad
t^{-1}nt=\alpha(n).
\]
Here $N$ is torsion-free nilpotent of class $3$ and has Hirsch length $13$.

The automorphism $\alpha$ acts as $-I$ on $N_{\mathrm{ab}}$. However,
$\alpha^2$ is not conjugation by an element of $N$. More concretely,
$\alpha^2$ fixes the $Z_k$'s and $C$, but sends
$X_i$ to $X_iC^{s_i}$, where
$s=(0,1,1,0,1,0)^{\mathsf t}$. Conjugation by a word
$Z_0^{w_0}\cdots Z_5^{w_5}$ changes the $X_i$'s by the vector $Dw$.
The matrix $D$ has determinant $2$, and in this example
$s\notin D\mathbb Z^6$ while $2s\in D\mathbb Z^6$. Thus $\alpha^2$ is not
inner, but $\alpha^4$ is inner. Therefore $[\alpha]$ has order $4$ in
$\operatorname{Out}(N)$.

The remaining point is to show that adding $t$ creates no new outer
automorphisms. Any automorphism of $G$ must preserve $N$. It also cannot send
$tN$ to $t^{-1}N$, because that would force $[\alpha]$ to be conjugate to
$[\alpha]^{-1}$, and this does not happen. Hence $t$ can only be sent to
something of the form $nt$. The calculations in $N$ then show that the
induced automorphism of $N$ must be a power of $\alpha$, up to an inner
automorphism. After removing those inner parts, the only remaining maps fix
$N$ pointwise and send $t$ to $C^jt$. These maps are also inner. Hence every
automorphism of $G$ is inner.

The proof is organized as follows.  In \cref{sec:construction} we define
$N$, while in \cref{sec:dopo} we define  the automorphism $\alpha$, and the group $G$, and verify the required
normal series. In \cref{sec:rigidity} we determine the normalizer of
$\langle[\alpha]\rangle$ in $\Out(N)$.  The
calculation of $\Out(G)$ is completed in \cref{sec:mapping-torus}.  

\section{The group $N$}\label{sec:construction}
Let $\Gamma$ be the graph with vertex set $\{0,1,2,3,4,5\}$ and edge set
\[
\begin{gathered}
 e_0=(0,2),\quad e_1=(1,2),\quad e_2=(1,3),\\
 e_3=(1,4),\quad e_4=(2,4),\quad e_5=(3,5).
\end{gathered}
\] Let $N$ be generated by
\[
X_0,\ldots,X_5,\qquad Z_0,\ldots,Z_5,\qquad C,
\]
subject to the following relations.
\begin{enumerate}[label=\textup{(N\arabic*)},leftmargin=2.7em]
\item\label{rel:N1} $C$ is central, and the elements $Z_0,\ldots,Z_5$
commute pairwise.

\item\label{rel:N2} If $e_k=(i,j)$ is an edge, with $i<j$, then
$[X_i,X_j]=Z_kC^{-1}$. If $\{i,j\}$ is not an edge, then
$[X_i,X_j]=1$.

\item\label{rel:N3} For every vertex $i$ and every edge $e_k$,
\[
[X_i,Z_k]=
 \begin{cases}
 C,&\text{if $i$ is an endpoint of $e_k$},\\
 1,&\text{otherwise.}
 \end{cases}
\]
\end{enumerate}
All commutators of weight at least $4$ are therefore trivial.

In order to prove that  every element of $N$ has a unique normal form
\[
X_0^{a_0}\cdots X_5^{a_5}
Z_0^{b_0}\cdots Z_5^{b_5}C^d,
\qquad a_i,b_k,d\in\mathbb Z
\] and that consequently $N$ is torsion-free nilpotent of class $3$ and has Hirsch length $13$, we need to construct a concrete model for $N$. {This is done by a direct construction from a rational Lie algebra.}

Let $\mathfrak n$ be the rational vector space with basis
\[
x_0,\ldots,x_5,\qquad z_0,\ldots,z_5,\qquad c .
\]
The edge $e_k$ has a corresponding basis element $z_k$. We define the bracket
on basis elements as follows. If $e_k=(i,j)$ is an edge with $i<j$, then
$[x_i,x_j]=z_k$. Also,
\[
[x_i,z_k]=
\begin{cases}
c, & \text{if $i$ is an endpoint of $e_k$},\\
0, & \text{otherwise}.
\end{cases}
\]
All other brackets between basis elements are zero, and the bracket is
extended by bilinearity and alternation. We now check the Jacobi identity on
basis elements.

First, any Jacobi expression containing $c$ is zero, since $c$ is central.

Suppose that at least two entries are edge vectors. Take $u\in\mathfrak n$
and two edge basis elements $z_k,z_l$. We have $[z_k,z_l]=0$. Also,
$[u,z_k]$ and $[u,z_l]$ are either $0$ or a multiple of $c$, hence they are
central. Therefore
\[
[u,[z_k,z_l]]+[z_k,[z_l,u]]+[z_l,[u,z_k]]=0.
\]

Now take two vertex vectors and one edge vector, say $x_i,x_j,z_k$. Then
\[
J(x_i,x_j,z_k)
=
[x_i,[x_j,z_k]]+[x_j,[z_k,x_i]]+[z_k,[x_i,x_j]].
\]
The elements $[x_j,z_k]$ and $[z_k,x_i]$ are either $0$ or multiples of $c$,
so the first two terms are zero. The element $[x_i,x_j]$ is either $0$ or
some $z_l$, and brackets between the $z_l$'s are zero. Hence the third term
is also zero. Thus $J(x_i,x_j,z_k)=0$.

It remains to check triples of vertex vectors. If two entries are equal, the
identity follows from alternation. For example,
\[
\begin{aligned}
J(x_i,x_i,x_j)
&=[x_i,[x_i,x_j]]+[x_i,[x_j,x_i]]+[x_j,[x_i,x_i]] \\
&=[x_i,[x_i,x_j]]-[x_i,[x_i,x_j]]+0=0.
\end{aligned}
\]
So assume that $i,j,l$ are pairwise distinct. We check the first summand
$[x_i,[x_j,x_l]]$. If $\{j,l\}$ is not an edge, then $[x_j,x_l]=0$, so
$[x_i,[x_j,x_l]]=0$. If $\{j,l\}=e_k$ is an edge, then
$[x_j,x_l]=\pm z_k$. The endpoints of $e_k$ are $j$ and $l$. Since $i$ is
different from both, $i$ is not an endpoint of $e_k$, and therefore
$[x_i,z_k]=0$. Hence again $[x_i,[x_j,x_l]]=0$. The same argument gives
$[x_j,[x_l,x_i]]=0$ and $[x_l,[x_i,x_j]]=0$. Therefore
\[
J(x_i,x_j,x_l)
=
[x_i,[x_j,x_l]]+[x_j,[x_l,x_i]]+[x_l,[x_i,x_j]]
=0.
\]
This proves the Jacobi identity.

We now check the nilpotency class. From the definition of the brackets, every
bracket of two basis elements lies in the span of $z_0,\ldots,z_5,c$. Thus
\[
[\mathfrak n,\mathfrak n]\leq \langle z_0,\ldots,z_5,c\rangle.
\]
Bracketing once more with $\mathfrak n$, the only possible nonzero terms are
of the form $[x_i,z_k]$, and these are either $0$ or $c$. Hence
\[
[\mathfrak n,[\mathfrak n,\mathfrak n]]\leq \langle c\rangle.
\]
Since $c$ is central, one more bracket gives
$[\mathfrak n,[\mathfrak n,[\mathfrak n,\mathfrak n]]]=0$.

On the other hand, if $e_k=\{i,j\}$ is an edge, then $[x_i,x_j]=\pm z_k$,
and $i$ is an endpoint of $e_k$. Therefore
\[
[x_i,[x_i,x_j]]
=
\pm[x_i,z_k]
=
\pm c
\neq 0.
\]
So the third term is nonzero, while the fourth term is zero. Hence
\[
\gamma_3(\mathfrak n)=\langle c\rangle\neq0,
\qquad
\gamma_4(\mathfrak n)=0.
\]
Therefore $\mathfrak n$ is nilpotent of class exactly $3$.

We now construct a group $\mathcal N$ from $\mathfrak n$. The elements of
$\mathcal N$ are written as $\exp(u)$, with $u\in\mathfrak n$. The product in
$\mathcal N$ is defined by
\[
\exp(a)\exp(b)
=
\exp\left(
a+b+\frac12[a,b]
+\frac1{12}[a,[a,b]]
-\frac1{12}[b,[a,b]]
\right).
\]
This formula stops here because all brackets of length $4$ are zero. With
this convention, we put
\[
X_i=\exp(x_i),\qquad Z_k=\exp(z_k),\qquad C=\exp(c).
\]
Thus $x_i,z_k,c$ are elements of the Lie algebra, while $X_i,Z_k,C$ are the
corresponding elements of the group $\mathcal N$.

We now compute the group commutator. With our convention,
$[g,h]=g^{-1}h^{-1}gh$. We want to rewrite
$[\exp x,\exp y]$ as $\exp(D)$, with $D\in\mathfrak n$. Put $r=[x,y]$.
First, using the product formula with $a=-x$ and $b=-y$, we get
$A=\log(\exp(-x)\exp(-y))=-x-y+\frac12r-\frac1{12}[x,r]+\frac1{12}[y,r]$.
Next set $B=\log(\exp(A)\exp(x))$. Since brackets of length $4$ vanish, we
have $[A,x]=r-\frac12[x,r]$, $[A,[A,x]]=-[x,r]-[y,r]$, and
$[x,[A,x]]=[x,r]$. Therefore
\[
B
=
A+x+\frac12[A,x]
+\frac1{12}[A,[A,x]]
-\frac1{12}[x,[A,x]]
=
-y+r-\frac12[x,r].
\]
Finally set $D=\log(\exp(B)\exp(y))$. Since $[B,y]=-[y,r]$, and all the
next terms have length $4$, we get $D=B+y+\frac12[B,y]$. Hence
\[
D
=
r-\frac12[x,r]-\frac12[y,r]
=
[x,y]-\frac12[x+y,[x,y]].
\]
Thus
\[
\log[\exp x,\exp y]
=
[x,y]-\frac12[x+y,[x,y]].
\]

We now apply this to the generators. If $e_k=(i,j)$ with $i<j$, then
$[x_i,x_j]=z_k$. Since both $i$ and $j$ are endpoints of $e_k$, we also have
$[x_i,z_k]=c$ and $[x_j,z_k]=c$. Therefore
\[
\log[X_i,X_j]
=
[x_i,x_j]-\frac12[x_i+x_j,[x_i,x_j]]
=
z_k-\frac12([x_i,z_k]+[x_j,z_k])
=
z_k-c.
\]
So $[X_i,X_j]=\exp(z_k-c)=Z_kC^{-1}$.

Similarly, for a vertex $i$ and an edge $e_k$, we have
$\log[X_i,Z_k]=[x_i,z_k]-\frac12[x_i+z_k,[x_i,z_k]]$. If $i$ is not an
endpoint of $e_k$, then $[x_i,z_k]=0$, so $[X_i,Z_k]=1$. If $i$ is an
endpoint of $e_k$, then $[x_i,z_k]=c$, and the correction term is zero
because $c$ is central. Hence $[X_i,Z_k]=C$. Also, $C$ is central. Thus the
generators satisfy the relations \ref{rel:N1}--\ref{rel:N3}.

Let $N$ be the subgroup of $\mathcal N$ generated by
$X_0,\ldots,X_5$, $Z_0,\ldots,Z_5$, and $C$. Using the relations just
computed, every word in these generators can be collected by moving all
$X$'s to the left, all $Z$'s to the middle, and all powers of $C$ to the
right. When an $X_i$ crosses an $X_j$, it only creates powers of some $Z_k$
and of $C$; when an $X_i$ crosses a $Z_k$, it only creates a power of $C$;
and $C$ commutes with everything. Thus every element of $N$ can be written in
the form
\[
X_0^{a_0}\cdots X_5^{a_5}
Z_0^{b_0}\cdots Z_5^{b_5}C^d,
\qquad a_i,b_k,d\in\mathbb Z.
\]
This is the normal form.

We now check that this expression is unique. Suppose a collected word is
trivial:
$X_0^{a_0}\cdots X_5^{a_5}Z_0^{b_0}\cdots Z_5^{b_5}C^d=1$. Take its
logarithm. No bracket can create an $x_i$ term, so the coefficient of $x_i$
in the logarithm is exactly $a_i$. Since the logarithm is zero, all $a_i$ are
zero. After this, the word is $Z_0^{b_0}\cdots Z_5^{b_5}C^d$. Brackets
between the $Z_k$'s are zero, and $C$ is central, so the coefficient of
$z_k$ is exactly $b_k$. Hence all $b_k$ are zero. We are left with $C^d=1$,
whose logarithm is $dc$, so $d=0$. Therefore the normal form is unique.

It follows that the presentation used in the main text is faithful: the
relations give exactly the subgroup $N$ constructed inside $\mathcal N$.
Moreover $N$ is torsion-free. Indeed, if $g^m=1$ for some $m>0$, then
$\log(g^m)=m\log(g)=0$, so $\log(g)=0$, and hence $g=1$.

Consequently $N$ is torsion-free nilpotent of class $3$ and has Hirsch
length~$13$, one generator for each of the six $X_i$'s, the six $Z_k$'s, and
the central generator~$C$.

\section{The group $G$ and its normal series}\label{sec:dopo}

Put
\[
U=\gamma_2(N)=\langle Z_0,\ldots,Z_5,C\rangle,
\qquad
K=\gamma_3(N)=\langle C\rangle.
\]
Then $N/U\cong\mathbb Z^6$ and $U/K\cong\mathbb Z^6$, with bases
$x_i=X_iU$ and $z_k=Z_kK$.

Indeed, every commutator of two generators lies in
$\langle Z_0,\ldots,Z_5,C\rangle$, so
$\gamma_2(N)\leq\langle Z_0,\ldots,Z_5,C\rangle$. Conversely, if
$e_k=(i,j)$, then $Z_k=[X_i,X_j]C\in\gamma_2(N)$, and
$C=[X_i,Z_k]\in\gamma_2(N)$. Hence
$\gamma_2(N)=\langle Z_0,\ldots,Z_5,C\rangle$. Moreover, once we
bracket again with a generator, the only possible non-trivial commutators are
$[X_i,Z_k]$, and these give powers of $C$. Since $C$ is central and
$[X_i,Z_k]=C$ whenever $i$ is an endpoint of $e_k$, we also get
$\gamma_3(N)=\langle C\rangle$.

The centre of $N$ is
\[
Z(N)=K=\langle C\rangle.
\] Let $g\in N$ be central and write its image in $N/U$ as
$X_0^{a_0}\cdots X_5^{a_5}U$. If some $a_j\neq0$, choose an edge
$e_k=\{i,j\}$ incident with $j$. In the commutator with $X_i$, the
$Z_k$-coordinate is $\pm a_j$, and no central element can have such a
non-zero $Z_k$-coordinate. Hence all $a_j$ are zero, and every central
element lies in $U$. It remains to check that no non-zero element of $U/K$
commutes with all the $X_i$.

Take an element of $U/K$ and write it as
$u=Z_0^{b_0}\cdots Z_5^{b_5}K$. To compute $[X_i,u]$, we use the relations
above. The generator $X_i$ commutes with $Z_k$ unless $i$ is an endpoint of
the edge $e_k$. If $i$ is an endpoint of $e_k$, then $[X_i,Z_k]=C$.
Therefore
\[
[X_i,u]=C^{\sum_{k:\, i\in e_k} b_k}.
\]
Here $i\in e_k$ just means that the vertex $i$ is one of the two endpoints
of the edge $e_k$.

The matrix below records exactly these sums. Its rows are indexed by the
vertices $0,\ldots,5$, and its columns are indexed by the edges
$e_0,\ldots,e_5$. The entry in row $i$ and column $k$ is $1$ if $i$ is an
endpoint of $e_k$, and is $0$ otherwise:
\[
D=
 \begin{pmatrix}
 1&0&0&0&0&0\\
 0&1&1&1&0&0\\
 1&1&0&0&1&0\\
 0&0&1&0&0&1\\
 0&0&0&1&1&0\\
 0&0&0&0&0&1
 \end{pmatrix}.
\]
Thus, if $b=(b_0,\ldots,b_5)^{\mathsf t}$, then the $i$-th entry of $Db$ is
$\sum_{k:\, i\in e_k} b_k$. Equivalently, $Db$ records the exponents of $C$
in the six commutators $[X_0,u],\ldots,[X_5,u]$.

This is the only notation we use here: the edge is $e_k$, and the corresponding generator is $Z_k$.

A direct calculation gives $\det D=2$. Hence $Db=0$ with
$b\in\mathbb Z^6$ implies $b=0$. Thus no non-zero element of $U/K$
centralizes all vertex generators.

\medskip

Define a map on the generators of $N$ by
\begin{align*}
 \alpha(X_0)&=X_0^{-1}Z_1,&
 \alpha(X_1)&=X_1^{-1}Z_0Z_3,\\
 \alpha(X_2)&=X_2^{-1}Z_0Z_3,&
 \alpha(X_3)&=X_3^{-1},\\
 \alpha(X_4)&=X_4^{-1}Z_3,&
 \alpha(X_5)&=X_5^{-1},
\end{align*}
while
\begin{equation}\label{eq:alpha-ZC}
 \alpha(Z_4)=Z_4C,
 \qquad
 {\alpha(Z_k)=Z_k\quad(k\neq4)},
 \qquad
 \alpha(C)=C^{-1}.
\end{equation}
Collection with the relations \ref{rel:N1}--\ref{rel:N3} shows that
these images satisfy the defining relations of $N$.  Hence they define
an endomorphism $\alpha$ of $N$.

Set
\[
s=(0,1,1,0,1,0)^{\mathsf t},
\qquad
q=(0,1,0,1,1,0)^{\mathsf t},
\qquad
R=Z_1Z_3Z_4.
\]

We first compute $\alpha^2$. Write $\alpha(X_i)=X_i^{-1}A_i$, where
$A_0=Z_1$, $A_1=Z_0Z_3$, $A_2=Z_0Z_3$, $A_3=1$, $A_4=Z_3$, and
$A_5=1$. Since $\alpha$ fixes $Z_0,Z_1,Z_3$, we have
$\alpha(A_i)=A_i$ for all $i$. Hence
$\alpha^2(X_i)=\alpha(X_i^{-1}A_i)=A_i^{-1}X_iA_i$.

Now $A_i^{-1}X_iA_i=X_i[X_i,A_i]$. Using the relations
$[X_i,Z_k]=C$ exactly when $i$ is an endpoint of $e_k$, we get
\[\begin{array}{c}
[X_0,A_0]=1,\quad
[X_1,A_1]=C,\quad
[X_2,A_2]=C,\quad
[X_3,A_3]=1,\\[0.1cm]
[X_4,A_4]=C,\quad
[X_5,A_5]=1.
\end{array}\]
Therefore
\[
\alpha^2(X_i)=X_iC^{s_i},
\qquad
\alpha^2(Z_k)=Z_k,
\qquad
\alpha^2(C)=C.
\]
For the $Z_k$'s, the only point to check is $Z_4$: indeed
$\alpha^2(Z_4)=\alpha(Z_4C)=Z_4CC^{-1}=Z_4$.

Next we compute $Dq$. Since $q$ has nonzero entries in positions
$1,3,4$, this means that we add the columns of $D$ corresponding to
$e_1,e_3,e_4$. These columns add up to $Dq=2s$.

We now compare $\alpha^4$ with conjugation by $R^{-1}$. Here
$\operatorname{Inn}(R^{-1})$ means the map $g\mapsto R^{-1}gR$.
Since $\alpha^2$ fixes all $Z_k$ and fixes $C$, we have
$\alpha^4(Z_k)=Z_k$ and $\alpha^4(C)=C$. Also,
$\alpha^4(X_i)=\alpha^2(X_iC^{s_i})=X_iC^{2s_i}$.

On the other hand, $R=Z_1Z_3Z_4$, so
$R=Z_0^{q_0}\cdots Z_5^{q_5}$. Hence
$[X_i,R]=C^{(Dq)_i}=C^{2s_i}$. Therefore
$R^{-1}X_iR=X_i[X_i,R]=X_iC^{2s_i}$, while $R^{-1}Z_kR=Z_k$ and
$R^{-1}CR=C$. Thus $\alpha^4$ and conjugation by $R^{-1}$ agree on all
generators, and so
\[
\alpha^4=\operatorname{Inn}(R^{-1}).
\]

In particular $\alpha$ is an automorphism, since its fourth power is an
automorphism. Finally,
$\alpha(R)=\alpha(Z_1Z_3Z_4)=Z_1Z_3(Z_4C)=RC$.

\medskip

Define
\[
G=\langle t\rangle\ltimes_{\alpha} N,
\qquad
t^{-1}nt=\alpha(n)\quad(n\in N).
\]
Thus conjugation by $t^{-1}$ acts on $N$ as $\alpha$. Equivalently, moving an
element of $N$ across $t$ is done by the rule $nt=t\alpha(n)$.

\begin{proposition}\label{prop:normal-series}
The group $G$ is torsion-free and has a normal series of length $14$ whose
factors are all infinite cyclic.
\end{proposition}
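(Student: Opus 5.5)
The plan is to build the series by refining the lower central series $1<K<U<N$ of $N$, using subgroups that $\alpha$ maps to themselves, and then to add $G$ on top. Every subgroup of $N$ that is characteristic, or merely $\alpha$-invariant and normal in $N$, is normal in $G=\langle t\rangle\ltimes_\alpha N$. This holds because $G$ is generated by $N$ and $t$, and conjugation by $t^{\pm1}$ acts on $N$ as $\alpha^{\pm1}$. Also $G/N\cong\Z$, so the top factor is infinite cyclic. Torsion-freeness is then immediate. If $g\in G\setminus N$, then $g^m\notin N$ for every $m\neq0$, since its image in $G/N\cong\Z$ is nonzero. If $g\in N$, then $g$ has infinite order by the torsion-freeness of $N$ proved in \cref{sec:construction}.

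The bottom of the series is as follows. $K=\langle C\rangle=Z(N)$ is characteristic and infinite cyclic. Next, $U/K\cong\Z^6$ with basis $z_0,\dots,z_5$. By \eqref{eq:alpha-ZC}, $\alpha$ acts trivially on $U/K$, since $\alpha(Z_4)=Z_4C\equiv Z_4$. Conjugation by $N$ also acts trivially on $U/K$, because $[N,U]\le K$. Hence any flag $K=U_0<U_1<\cdots<U_6=U$ with $U_j=\langle Z_0,\dots,Z_{j-1},C\rangle$ consists of subgroups normal in $G$. Each $U_j$ is a subgroup because the $Z_k$ commute and $C$ is central. Each factor $U_j/U_{j-1}$ is infinite cyclic by the uniqueness of the normal form.

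For the layer $N/U\cong\Z^6$, the action of $N$ by conjugation is trivial, and $\alpha$ acts as $-I$ because $\alpha(X_i)\equiv X_i^{-1}\bmod U$. So every subgroup of $N/U$ is $G$-invariant. I would take $N_j=\langle X_0,\dots,X_{j-1}\rangle U$ for $j=0,\dots,6$. Each $N_j$ is the full preimage of a coordinate subgroup of $N/U$, hence a subgroup normal in $N$. It is stable under $\alpha^{\pm1}$, since $\alpha$ maps it to itself and so does $\alpha^{-1}=\alpha^3\circ\operatorname{Inn}(R)$. Its factors are $\Z$ by the normal form.

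Altogether this gives $1<K<U_1<\cdots<U_6=U<N_1<\cdots<N_6=N<G$, of length $1+6+6+1=14$, with all terms normal in $G$ and all factors infinite cyclic. I do not expect a real obstacle. The only point needing care is that invariance under $\alpha$ alone gives $\alpha(H)\le H$, so one must also check $\alpha^{-1}(H)\le H$. Here this follows from $\alpha^4=\operatorname{Inn}(R^{-1})$ and normality in $N$; alternatively, the induced action on each factor is $\pm I$, which is invertible on the lattice.
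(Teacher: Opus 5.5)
Your proposal is correct and follows essentially the same route as the paper: it uses the same chain $1<\langle C\rangle<\langle C,Z_0\rangle<\cdots<U<\langle U,X_0\rangle<\cdots<N<G$ and the same torsion-freeness argument via $G/N\cong\Z$. Your layerwise justification, that $\alpha$ induces $I$ on $U/K$ and $-I$ on $N/U$, together with your explicit check that each term is also invariant under $\alpha^{-1}$, spells out the step the paper only asserts ("stable under conjugation by $t^{-1}$, and therefore also under conjugation by $t$").
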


\begin{proof}
Consider the chain obtained by adding the generators in this order: first
$C$, then $Z_0,\ldots,Z_5$, then $X_0,\ldots,X_5$, and finally $t$:
\[
\begin{array}{c}
1<\langle C\rangle
<\langle C,Z_0\rangle<\cdots
<\langle C,Z_0,\ldots,Z_5\rangle\\[0.1cm]
<\langle C,Z_0,\ldots,Z_5,X_0\rangle<\cdots<N<G.
\end{array}
\]
Each step adds one new generator, so each factor is generated by the image of
that generator.

We first check normality inside $N$. The generator $C$ is central. The
generators $Z_k$ commute with one another, and conjugating a $Z_k$ by any
$X_i$ changes it only by a power of $C$, since $[X_i,Z_k]$ is either $1$ or
$C$. Similarly, conjugating an $X_i$ by any $X_j$ changes it only by elements
generated by the $Z_k$'s and $C$, since $[X_i,X_j]$ is either $1$ or
$Z_kC^{-1}$. Hence, whenever we conjugate a generator already present in the
chain by a generator of $N$, the result differs from it only by elements
which occur earlier in the chain. Therefore every term of the chain up to
$N$ is normal in $N$.

Now we check the action of $t$. Since $t^{-1}nt=\alpha(n)$, conjugation by
$t^{-1}$ is exactly $\alpha$. The defining formulas for $\alpha$ show that
each subgroup in the chain is sent to itself. Hence the chain is stable under
conjugation by $t^{-1}$, and therefore also under conjugation by $t$. So
every term is normal in $G$.

The normal form shows that the factors up to $N$
are infinite cyclic. The last factor is $G/N\cong\langle tN\rangle\cong
\mathbb Z$, so it is infinite cyclic as well. Thus the series has length
$14$ and all its factors are infinite cyclic.

Finally, suppose that $g\in G$ has finite order. Its image in
$G/N\cong\mathbb Z$ also has finite order. Since $\mathbb Z$ has no nontrivial
torsion, this image is trivial. Hence $g\in N$. But $N$ is torsion-free, so
$g=1$. Therefore $G$ is torsion-free.
\end{proof}

\section{The normalizer of $\langle[\alpha]\rangle$ in $\Out(N)$}\label{sec:rigidity}

The commutator maps are
\[
 \kappa:\Lambda^2(N/U)\longrightarrow U/K,
 \qquad
 \mu:(N/U)\times(U/K)\longrightarrow K,
\]
defined as follows. For $i<j$,
\[
 \kappa(x_i,x_j)=
 \begin{cases}
 z_k,&\text{if $e_k=(i,j)$ is an edge},\\
 0,&\text{otherwise},
 \end{cases}
\]
and $\kappa$ is extended by alternation. Also,
\[
 \mu(x_i,z_k)=
 \begin{cases}
 C,&\text{if $i$ is an endpoint of $e_k$},\\
 1,&\text{otherwise}.
 \end{cases}
\]
After identifying $K=\langle C\rangle$ with $\Z$, the matrix of $\mu$ is $D$.

\begin{lemma}\label{lem:graded-rigidity}
Let $\beta\in\operatorname{Aut}(N)$. On the quotients $N/U$, $U/K$, and $K$,
the maps induced by $\beta$ are respectively
\[
\varepsilon I_6,\qquad I_6,\qquad \varepsilon,
\qquad \varepsilon\in\{1,-1\}.
\]
\end{lemma}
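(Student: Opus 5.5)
The plan is to reduce everything to the graded pieces and then to the linear algebra of $\kappa$ and $\mu$.

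\emph{Setting up the constraints.} Since $U=\gamma_2(N)$ and $K=\gamma_3(N)$ are characteristic, $\beta$ induces $A\in\mathrm{GL}_6(\Z)$ on $N/U$, $B\in\mathrm{GL}_6(\Z)$ on $U/K$, and $c\in\{\pm1\}$ on $K\cong\Z$. Compatibility of $\beta$ with commutators gives
\[
\kappa(Ax,Ay)=B\,\kappa(x,y),\qquad \mu(Ax,Bz)=c\,\mu(x,z).
\]
Every $z_k$ is a value of $\kappa$, so $B$ is determined by $A$. In matrix form the second identity reads $A^{\mathsf t}DB=cD$. Hence it suffices to prove $A=\varepsilon I_6$. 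Granting this, $B$ acts on $z_k=\kappa(x_i,x_j)$ by $\varepsilon^2=1$, so $B=I_6$. Then $\mu(\varepsilon x,z)=c\,\mu(x,z)$ forces $c=\varepsilon$.

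\emph{Showing $A$ is monomial.} This is the main obstacle. I would work over $\mathbb Q$ and use invariants of $x\in N/U$ preserved by $A$. The first is the rank of $\kappa(x,\cdot)\colon N/U\to U/K$. For $x=\sum a_ix_i$ the image is spanned by the vectors $a_iz_k$ (up to sign) with $e_k$ incident to a vertex of the support, suitably combined. I would compute the loci where this rank is small. The hope is that they are exactly the vertex lines, with rank equal to the degree: degrees $1,3,3,2,2,1$ for vertices $0,\dots,5$. Where a rank locus is not a union of coordinate lines, I would refine the invariant with the cubic form
\[
F(x,y)=\mu\bigl(x,\kappa(x,y)\bigr)=\sum_{e_k=(i,j)}(a_ib_j-a_jb_i)(a_i+a_j),
\]
which satisfies $F(Ax,Ay)=c\,F(x,y)$. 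The kernel of $y\mapsto F(x,y)$ and the vanishing locus of $x\mapsto F(x,\cdot)$ are extra $A$-invariants that should cut out the vertex lines. I would first try to isolate $x_0$ and $x_5$ (degree one), then their neighbours $x_2$ and $x_3$ through the pairs killed by $\kappa$, and so on along the graph. If the invariant loci are not pure coordinate lines, I would show that any off-diagonal entry of $A$ produces an image of some edge vector $\kappa(x_i,x_j)$ that is not a single $z_l$ up to the $B$-action compatible with $D$. This is the delicate, case-heavy part.

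\emph{Finishing.} Once $A$ is monomial, say $Ax_i=\varepsilon_ix_{\pi(i)}$, the identity $\kappa(Ax,Ay)=B\kappa(x,y)$ together with $\kappa(x_i,x_j)=0$ for non-edges shows that $\pi$ is an automorphism of $\Gamma$. The degree sequence, together with the fact that $0$ is adjacent to the degree-$3$ vertex $2$ while $5$ is adjacent to the degree-$2$ vertex $3$, and the triangle $1$--$2$--$4$, show that $\mathrm{Aut}(\Gamma)$ is trivial, so $\pi=\mathrm{id}$. Then $Bz_k=\varepsilon_i\varepsilon_jz_k$ for $e_k=(i,j)$. The $\mu$-identity at an endpoint $i$ of $e_k$ gives $\varepsilon_i\cdot\varepsilon_i\varepsilon_j=c$, that is $\varepsilon_j=c$. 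Every vertex is an endpoint of some edge, so every vertex occurs as such a $j$. Hence all $\varepsilon_j=c=:\varepsilon$, which gives $A=\varepsilon I_6$ and completes the argument.
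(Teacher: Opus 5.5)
\textbf{Where the proposal breaks.} Your reduction is sound: $U$ and $K$ are characteristic, $B$ is determined by $A$, the identity $A^{\mathsf t}DB=cD$ is correct, and your finishing argument is correct \emph{once $A$ is known to be monomial} (trivial $\operatorname{Aut}(\Gamma)$, then $\varepsilon_i^2\varepsilon_j=c$ at each endpoint). The gap is exactly the monomiality step, and it cannot be filled, because it is false.

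\textbf{A counterexample to the lemma.} Vertex $0$ is a leaf hanging from vertex $2$. Define
\[
\beta:\quad X_0\mapsto X_0^{-1},\qquad X_2\mapsto X_2X_0^{-2}Z_0^{-1}C^{-1},\qquad Z_0\mapsto Z_0^{-1},
\]
fixing all other generators. In $\mathfrak n$ this is $x_0\mapsto -x_0$, $x_2\mapsto x_2-2x_0$, $z_0\mapsto -z_0$. The only brackets that change are
\[
[-x_0,\,x_2-2x_0]=-z_0,\qquad [-x_0,-z_0]=c,\qquad [x_2-2x_0,\,-z_0]=-c+2c=c .
\]
All others are unchanged, because $0$ is adjacent only to $2$ and lies only on $e_0$. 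At group level one checks (N1)--(N3) directly; for example $[X_0^{-1},X_2X_0^{-2}Z_0^{-1}C^{-1}]=Z_0^{-1}C^{-1}=\beta(Z_0C^{-1})$. Also $\beta^2=1$, so $\beta\in\operatorname{Aut}(N)$.

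It induces the non-monomial map $x_0\mapsto -x_0$, $x_2\mapsto x_2-2x_0$ on $N/U$, the map $\operatorname{diag}(-1,1,1,1,1,1)$ on $U/K$, and $1$ on $K$. So the lemma as stated is false. The same construction works at the leaf $5$.

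Because $\beta$ is a genuine automorphism, it preserves both invariants you propose: the rank of $\kappa(x,\cdot)$, and $F$ with $c=1$. So no refinement of them can isolate the line $\mathbb Q x_2$. Already in $N/K$, the map $x_1\mapsto x_1+x_0$ respects every non-edge relation, so the rank loci are not unions of coordinate lines either.

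\textbf{The paper's proof has the same error.} Its table of ``allowed supports'' is the transpose of what the non-edge equations actually give. For instance, $Sx_1=x_1+x_0$ satisfies all of them, contradicting $\operatorname{supp}(Sx_1)\subseteq\{1\}$. So $S$ is never forced to be diagonal. On the correct pattern ($Sx_0\in\mathbb Q x_0$, $Sx_5\in\mathbb Q x_5$, $Sx_4\in\langle x_0,x_4\rangle$, $Sx_3\in\langle x_3,x_5\rangle$, $Sx_2\in\langle x_0,x_2,x_4\rangle$, $Sx_1\in\langle x_0,x_1,x_4,x_5\rangle$), the equation $S^{\mathsf t}DT=\lambda D$ leaves a group of order $8$, generated by $-I_6$ and the two leaf flips.

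\textbf{The error reaches the main theorem.} Let $w$ be given by
\[
X_0\mapsto X_0^{-1}Z_1,\quad X_1\mapsto X_1Z_0^{-1},\quad X_2\mapsto X_2X_0^{-2}Z_0^{-2}Z_1C^{-2},\quad Z_0\mapsto Z_0^{-1},
\]
with all other generators fixed. Then $w$ is an involutive automorphism of $N$ that commutes with $\alpha$ exactly; this can be checked on the basis of $\mathfrak n$. Hence $t\mapsto t$, $n\mapsto w(n)$ is an automorphism of $G$. It acts on $N/U$ by a non-scalar matrix, while inner automorphisms of $G$ act there as $\pm I_6$, so it is not inner. Therefore \cref{prop:normalizer} and the Main Theorem also fail as stated.
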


\begin{proof}
Use the bases $x_i=X_iU$ for $N/U$, $z_k=Z_kK$ for $U/K$, and $C$ for $K$.
Let the maps induced by $\beta$ on these three quotients be $S$, $T$, and
multiplication by $\lambda$. Thus $S,T\in \operatorname{GL}_6(\mathbb Z)$
and $\lambda\in\{1,-1\}$.

We write $\kappa$ for the bracket on $N/U$. Thus, if $e_k=(i,j)$ with
$i<j$, then $\kappa(x_i,x_j)=z_k$, and if $\{i,j\}$ is not an edge, then
$\kappa(x_i,x_j)=0$. Since $\beta$ sends brackets to brackets, we have
$T\kappa(u,v)=\kappa(Su,Sv)$.

We also use the brackets between $N/U$ and $U/K$. If
$u=\sum_i a_i x_i$ and $z=\sum_k b_k z_k$, then the exponent of $C$ in
$[u,z]$ is $a^{\mathsf t}Db$. Applying $\beta$ to this bracket multiplies
the exponent of $C$ by $\lambda$. On the other hand, the images of $u$ and
$z$ have coordinate vectors $Sa$ and $Tb$. Therefore, for all $a,b$, we get
$a^{\mathsf t}S^{\mathsf t}DTb=\lambda a^{\mathsf t}Db$. Hence
\[
S^{\mathsf t}DT=\lambda D.
\]

Now write $S=(s_{ri})$, so that
$Sx_i=\sum_r s_{ri}x_r$. If $\{i,j\}$ is not an edge, then
$\kappa(x_i,x_j)=0$, so also $\kappa(Sx_i,Sx_j)=0$. For an edge
$e_k=(p,q)$ with $p<q$, the coefficient of $z_k$ in
$\kappa(Sx_i,Sx_j)$ is
$s_{pi}s_{qj}-s_{qi}s_{pj}$. Therefore every non-edge $\{i,j\}$ gives the
equations
$s_{pi}s_{qj}-s_{qi}s_{pj}=0$ for all edges $e_k=(p,q)$.

Running these equations over the non-edges of our graph gives the following
allowed supports:
\[
\begin{array}{lll}
\operatorname{supp}(Sx_0)\subseteq\{0,1,2,4\},&
\operatorname{supp}(Sx_1)\subseteq\{1\},&
\operatorname{supp}(Sx_2)\subseteq\{2\},\\[0.05cm]
\operatorname{supp}(Sx_3)\subseteq\{3\},&
\operatorname{supp}(Sx_4)\subseteq\{1,2,4\},&
\operatorname{supp}(Sx_5)\subseteq\{1,3,5\}.
\end{array}
\]
These are obtained by comparing the coefficients
$s_{pi}s_{qj}-s_{qi}s_{pj}$ for each non-edge $\{i,j\}$ and each edge
$e_k=(p,q)$. Hence $S$ has the following shape:
\[
S=
\begin{pmatrix}
 d_0&0&0&0&0&0\\
 a_{01}&d_1&0&0&a_{41}&a_{51}\\
 a_{02}&0&d_2&0&a_{42}&0\\
 0&0&0&d_3&0&a_{53}\\
 a_{04}&0&0&0&d_4&0\\
 0&0&0&0&0&d_5
\end{pmatrix}.
\]
This means, for example, that $Sx_0$ may involve $x_0,x_1,x_2,x_4$; that
$Sx_4$ may involve $x_1,x_2,x_4$; and that $Sx_5$ may involve
$x_1,x_3,x_5$. All other entries are forced to be zero by the non-edge
equations above. Since $S$ is invertible, all $d_i$ are nonzero.

We now plug this shape back into the same non-edge equations. The non-edge
$\{0,1\}$ gives
$a_{02}d_1=0$ and $a_{04}d_1=0$. Indeed, these are the coefficients of
$z_1$ and $z_3$ in $\kappa(Sx_0,Sx_1)$. The non-edge $\{0,3\}$ gives
$a_{01}d_3=0$. The non-edge $\{0,4\}$ gives $a_{42}d_0=0$. The non-edge
$\{1,5\}$ gives $a_{53}d_1=0$. The non-edge $\{2,5\}$ gives
$a_{51}d_2=0$. Finally, the non-edge $\{3,4\}$ gives $a_{41}d_3=0$.
Altogether,
\[
a_{02}d_1=a_{04}d_1=a_{01}d_3=a_{42}d_0
=a_{53}d_1=a_{51}d_2=a_{41}d_3=0.
\]
Since all $d_i$ are nonzero, all these $a$'s are zero. Thus $S$ is diagonal:
$S=\operatorname{diag}(d_0,d_1,d_2,d_3,d_4,d_5)$.

Now take an edge $e_k=(i,j)$ with $i<j$. Since $\kappa(x_i,x_j)=z_k$, the
identity $T\kappa(u,v)=\kappa(Su,Sv)$ gives
$Tz_k=\kappa(Sx_i,Sx_j)=\kappa(d_ix_i,d_jx_j)=d_id_jz_k$. Therefore $T$ is
also diagonal: on the basis vector $z_k$ corresponding to the edge
$e_k=(i,j)$, it multiplies by $d_id_j$.

We now use $S^{\mathsf t}DT=\lambda D$. The column of $D$ corresponding to
the edge $e_k=(i,j)$ has two nonzero entries, both equal to $1$: one in row
$i$ and one in row $j$. First $T$ multiplies this whole column by $d_id_j$.
Then $S^{\mathsf t}$ multiplies row $i$ by $d_i$ and row $j$ by $d_j$.
Hence the two nonzero entries become $d_i^2d_j$ and $d_id_j^2$. But in
$\lambda D$ the same two entries are both equal to $\lambda$. Therefore
$d_i^2d_j=\lambda=d_id_j^2$. Since $d_i$ and $d_j$ are nonzero, this gives
$d_i=d_j$.

The graph is connected. Applying the equality $d_i=d_j$ along its edges
gives
$d_0=d_2=d_1=d_3=d_4=d_5$. Let this common value be $d$. Then $S=dI_6$.
Since $S\in\operatorname{GL}_6(\mathbb Z)$, we must have
$d\in\{1,-1\}$. Put $\varepsilon=d$.

For every edge $e_k=(i,j)$, we have $Tz_k=d_id_jz_k=d^2z_k=z_k$. Hence
$T=I_6$. Finally, $S^{\mathsf t}DT=\lambda D$ becomes
$\varepsilon D=\lambda D$, so $\lambda=\varepsilon$. Therefore the maps
induced by $\beta$ on $N/U$, $U/K$, and $K$ are respectively
$\varepsilon I_6$, $I_6$, and $\varepsilon$.
\end{proof}

\begin{proposition}\label{prop:alpha-order}
The class $[\alpha]$ has order exactly $4$ in $\Out(N)$.
\end{proposition}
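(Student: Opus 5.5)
We already know $\alpha^4=\operatorname{Inn}(R^{-1})$, so $[\alpha]^4=1$ in $\Out(N)$. Hence the order of $[\alpha]$ divides $4$, and it is exactly $4$ as soon as $[\alpha]^2\neq1$, that is, as soon as $\alpha^2$ is not inner. (If $\alpha^2$ is not inner, then neither is $\alpha$, so the order cannot be $1$ or $2$.) The plan is therefore to assume $\alpha^2=\operatorname{Inn}(g)$ for some $g\in N$, where $\operatorname{Inn}(g)$ denotes $n\mapsto g^{-1}ng$, and derive a contradiction from the lattice condition $s\notin D\Z^6$.

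\emph{Step 1: locate $g$.} By the computation above, $\alpha^2$ induces the identity on $N/U$, and it fixes every $Z_k$ and $C$. Write $g=X_0^{a_0}\cdots X_5^{a_5}u$ with $u\in U$. Conjugation by $g$ must fix each $Z_k$, so $[Z_k,g]=1$ for all $k$. Modulo the central subgroup $K$, the element $u$ commutes with $Z_k$, and $[Z_k,X_i]$ is trivial or $C^{-1}$. Hence the exponent of $C$ in $[Z_k,g]$ is $-\sum_i a_i D_{ik}$, i.e. $-(D^{\mathsf t}a)_k$. Since $\det D=2\neq0$, this forces $a=0$, so $g\in U$.

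\emph{Step 2: compare on the $X_i$.} Modulo the central $C$-component, write $g=Z_0^{w_0}\cdots Z_5^{w_5}C^{e}$. As in the computation for $R$, we have $g^{-1}X_ig=X_i[X_i,g]=X_iC^{(Dw)_i}$. Equating this with $\alpha^2(X_i)=X_iC^{s_i}$ gives $Dw=s$ with $w\in\Z^6$.

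\emph{Step 3: show $s\notin D\Z^6$.} This is the only step that needs care, although it is a finite check. The idea is to exhibit an integer row vector $y$ with $y^{\mathsf t}D\equiv 0\pmod 2$ but $y^{\mathsf t}s$ odd; equivalently, $y\bmod 2$ lies in the left kernel of $D$ over $\mathbb F_2$. Since $\det D=2$, that left kernel is one-dimensional over $\mathbb F_2$. It is found by requiring each edge column to have even weight on $y$. The columns $e_0$ and $e_5$ force $y_0=y_2$ and $y_3=y_5$. The columns $e_1,e_2,e_3,e_4$ force $y_1=y_2=y_3=y_4$. So $y=(1,1,1,1,1,1)$ works: every column of $D$ has exactly two ones, so $y^{\mathsf t}D=2\cdot(1,\dots,1)$, which is even.

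Then $Dw=s$ would give $y^{\mathsf t}s=y^{\mathsf t}Dw\equiv0\pmod 2$. But $y^{\mathsf t}s=0+1+1+0+1+0=3$ is odd. This contradiction shows that $\alpha^2$ is not inner, so $[\alpha]$ has order exactly $4$.
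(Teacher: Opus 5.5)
Your proof is correct and follows the paper's argument. It goes through the same steps: $[\alpha]^4=1$ from $\alpha^4=\operatorname{Inn}(R^{-1})$, reduction of a hypothetical conjugating element to $Z_0^{w_0}\cdots Z_5^{w_5}$ modulo $K$, the equation $Dw=s$, and the parity obstruction from every column of $D$ summing to $2$ (your all-ones vector $y$). The only small difference is how you remove the vertex part of $g$: you require $g$ to centralize the $Z_k$ and use $\det D\neq 0$, whereas the paper notes that a vertex part would create a nontrivial $Z_k$-coordinate in some $g^{-1}X_ig$; both routes are valid.
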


\begin{proof}
We already computed that $\alpha^4=\operatorname{Inn}(R^{-1})$, with the
convention that $\operatorname{Inn}(h)$ sends $g$ to $hgh^{-1}$. Thus
$\alpha^4$ is inner, so $[\alpha]^4=1$ in $\Out(N)$.

It remains to show that $\alpha^2$ is not inner. Recall that
$\alpha^2(X_i)=X_iC^{s_i}$, while $\alpha^2$ fixes every $Z_k$ and fixes
$C$, where $s=(0,1,1,0,1,0)^{\mathsf t}$.

Suppose that $\alpha^2$ were inner. Since $\alpha^2(X_i)$ differs from
$X_i$ only by a power of $C$, the conjugating element cannot have any
non-zero vertex part: a vertex part would create some $Z_k$-term in the
conjugate of some $X_i$. Therefore, modulo a central element, the conjugating
element must have the form $W=Z_0^{w_0}\cdots Z_5^{w_5}$, with
$w=(w_0,\ldots,w_5)^{\mathsf t}\in\mathbb Z^6$.

Now conjugation by $W^{-1}$ sends $X_i$ to
$W^{-1}X_iW=X_iC^{(Dw)_i}$. Indeed, each $Z_k^{w_k}$ contributes
$C^{w_k}$ exactly when $i$ is an endpoint of the edge $e_k$, and contributes
nothing otherwise. Hence the vector of central exponents produced on the
six generators $X_i$ is exactly $Dw$.

Thus $\alpha^2$ is inner only if $s=Dw$ for some $w\in\mathbb Z^6$. But
every column of $D$ has sum $2$. Therefore the sum of the entries of $Dw$ is
$2(w_0+\cdots+w_5)$, which is even. The sum of the entries of $s$ is
$0+1+1+0+1+0=3$, which is odd. Hence $s\notin D\mathbb Z^6$.

On the other hand, we computed $Dq=2s$, where
$q=(0,1,0,1,1,0)^{\mathsf t}$. This is why $\alpha^4$ is inner: it gives the
central exponent vector $2s$. Therefore $\alpha^2$ is not inner, but
$\alpha^4$ is inner. Hence $[\alpha]$ has order exactly $4$ in $\Out(N)$.
\end{proof}

Let $N^{\mathbb Q}$ be the group obtained from the same rational Lie algebra
as~$N$, but allowing rational coordinates in the normal form. Thus an element
of $N^{\mathbb Q}$ has the form
$X_0^{a_0}\cdots X_5^{a_5}Z_0^{b_0}\cdots Z_5^{b_5}C^d$ with
$a_i,b_k,d\in\mathbb Q$. We put
$\operatorname{Inn}_{\mathbb Q}(N)=\operatorname{Aut}(N)\cap
\operatorname{Inn}(N^{\mathbb Q})$.

\begin{lemma}\label{lem:saturation}
There is an isomorphism
\[
\operatorname{Inn}_{\mathbb Q}(N)/\operatorname{Inn}(N)
\cong D^{-1}\mathbb Z^6/\mathbb Z^6\cong C_2.
\]
The non-trivial element is represented by $[\alpha]^2$.
\end{lemma}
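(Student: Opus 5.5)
Our plan is to describe $\operatorname{Inn}(N^{\mathbb Q})\cap\operatorname{Aut}(N)$ explicitly by coordinates, then quotient by $\operatorname{Inn}(N)$. Take $h\in N^{\mathbb Q}$ with normal form $X^{a}Z^{b}C^{d}$, $a,b\in\mathbb Q^6$, and suppose conjugation by $h$ maps $N$ into $N$. Since $C$ is central, we may take $d=0$.

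\textbf{Step 1: the vertex part $a$ must be integral modulo $N$.} Conjugating $X_j$ by $h$ changes its $U/K$-coordinates by the vector $\kappa(a,x_j)$. Every vertex lies on an edge, so this vector must be integral for all $j$, which forces $a\in\mathbb Z^6$. Multiplying $h$ by $X^{-a}\in N$, we may assume $a=0$. So $h=W=Z^{b}$ with $b\in\mathbb Q^6$. Its conjugation fixes all $Z_k$ and $C$ and sends $X_i\mapsto X_iC^{\pm(Db)_i}$, so it preserves $N$ exactly when $Db\in\mathbb Z^6$, that is, $b\in D^{-1}\mathbb Z^6$.

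\textbf{Step 2: identify the quotient.} Consider the map $b\mapsto$ conjugation by $Z^{b}$, from $D^{-1}\mathbb Z^6$ to $\operatorname{Inn}_{\mathbb Q}(N)/\operatorname{Inn}(N)$. It is a homomorphism, because the $Z$'s commute in $N^{\mathbb Q}$. By Step 1 it is surjective. For the kernel, suppose conjugation by $Z^b$ equals conjugation by some $g\in N$. Then $g^{-1}Z^{b}$ is central in $N^{\mathbb Q}$. The centre of $N^{\mathbb Q}$ is $\langle C\rangle_{\mathbb Q}$, by the same argument as for $Z(N)$, using $\det D\neq0$ over $\mathbb Q$. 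Comparing coordinates gives $b\in\mathbb Z^6$. Conversely, $\mathbb Z^6$ clearly lies in the kernel. So the quotient is $D^{-1}\mathbb Z^6/\mathbb Z^6$. This group has order $|\det D|=2$, so it is $C_2$.

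\textbf{Step 3: the generator.} We have $Dq=2s$, so $b=\tfrac12 q\in D^{-1}\mathbb Z^6$ and $b\notin\mathbb Z^6$. Conjugation by $Z^{\mp q/2}$ sends $X_i\mapsto X_iC^{(Dq/2)_i}=X_iC^{s_i}$ and fixes $Z_k$ and $C$. This is exactly $\alpha^2$, up to fixing the sign convention of conjugation. Hence $\alpha^2\in\operatorname{Inn}_{\mathbb Q}(N)$. By \cref{prop:alpha-order}, $\alpha^2\notin\operatorname{Inn}(N)$, so $[\alpha]^2$ represents the nontrivial element.

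The main obstacle is Step 1. Conjugation by a rational $X^a$ also produces $C$-terms of the form $\tfrac12$-brackets and cross terms with $b$. One must check that integrality of the $U/K$-coordinates alone forces $a\in\mathbb Z^6$. I would use the formula $\log[\exp x,\exp y]=[x,y]-\tfrac12[x+y,[x,y]]$ and read off only the $z$-coefficients, which are exactly the $\kappa(a,x_j)$, before doing any reduction.
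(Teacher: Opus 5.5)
Your proposal is correct and follows essentially the paper's route: integrality of the vertex coordinates via the $Z_k$-coordinate of a conjugate along an incident edge, reduction to conjugation by $Z^w$ with $Dw\in\mathbb Z^6$, the quotient $D^{-1}\mathbb Z^6/\mathbb Z^6$ of order $|\det D|=2$, and identification of $\alpha^2$ with conjugation by $R^{\mp 1/2}$. Your kernel argument, via $C_{N^{\mathbb Q}}(N)=\langle C\rangle_{\mathbb Q}$ and uniqueness of the rational normal form, is more careful than the paper's: it proves that if conjugation by $Z^b$ is inner in $N$ then $b\in\mathbb Z^6$, a direction the paper only asserts.
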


\begin{proof}
Let conjugation by some $g\in N^{\mathbb Q}$ preserve $N$. Write $g$ in
rational normal form. Suppose first that the vertex coordinate of $g$ at
$X_j$ is $v_j$. Choose an edge $e_k$ incident with $j$, say
$e_k=\{i,j\}$. When we conjugate $X_i$ by the vertex part $X_j^{v_j}$, the
normal form acquires a $Z_k$-coordinate equal to $\pm v_j$. Since the image
must lie in $N$, this coordinate must be integral. Hence $v_j\in\mathbb Z$.
This holds for every $j$, because every vertex lies on an edge.

Modulo conjugation by an element of $N$, we may therefore remove all vertex
coordinates of $g$. The central coordinate does not affect conjugation, so we
may write
$g=Z_0^{w_0}\cdots Z_5^{w_5}$ with $w=(w_0,\ldots,w_5)^{\mathsf t}\in
\mathbb Q^6$.

Conjugation by $g^{-1}$ fixes every $Z_k$ and fixes $C$. On the vertex
generators it gives
$g^{-1}X_ig=X_iC^{(Dw)_i}$. Therefore this conjugation preserves $N$ exactly
when all entries of $Dw$ are integers, that is, exactly when
$w\in D^{-1}\mathbb Z^6$.

Two such vectors $w,w'$ give the same element modulo
$\operatorname{Inn}(N)$ exactly when $w-w'\in\mathbb Z^6$, because integral
vectors come from conjugation by words in the integral generators
$Z_0,\ldots,Z_5$. Hence
$\operatorname{Inn}_{\mathbb Q}(N)/\operatorname{Inn}(N)\cong
D^{-1}\mathbb Z^6/\mathbb Z^6$.

A direct row computation gives $|\det D|=2$. Therefore
$D^{-1}\mathbb Z^6/\mathbb Z^6$ has two elements. The vector $q/2$ is not
integral, but $D(q/2)=s$ is integral. Thus $q/2$ represents the non-trivial
class.

In $N^{\mathbb Q}$ we have
$R^{1/2}=Z_1^{1/2}Z_3^{1/2}Z_4^{1/2}$. Since $q/2$ is the exponent vector of
$R^{1/2}$, conjugation by $R^{-1/2}$ sends $X_i$ to $X_iC^{s_i}$ and fixes
all $Z_k$ and $C$. This is exactly the action of $\alpha^2$. Therefore
$[\alpha]^2$ is the non-trivial element of the quotient.
\end{proof}

We need the following elementary calculation.

\begin{lemma}\label{lem:unipotent-vector-space}
Let $\mathcal U$ be the group of automorphisms of $N^{\mathbb Q}$ which act
as the identity on $N^{\mathbb Q}/\gamma_2(N^{\mathbb Q})$, on
$\gamma_2(N^{\mathbb Q})/\gamma_3(N^{\mathbb Q})$, and on
$\gamma_3(N^{\mathbb Q})$. Put
$\mathcal V=\mathcal U/(\mathcal U\cap\operatorname{Inn}(N^{\mathbb Q}))$.
Then $\mathcal V$ is a vector space over $\mathbb Q$. Moreover, conjugation
by an automorphism acting as $-I$ on the first quotient, as $I$ on the second
quotient, and as $-I$ on the third quotient acts on $\mathcal V$ as
multiplication by $-1$.
\end{lemma}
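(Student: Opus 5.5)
I would pass to the Lie algebra throughout. Via $\log$, automorphisms of $N^{\mathbb Q}$ are the same as Lie algebra automorphisms of $\mathfrak n$, and $\mathcal U$ becomes the group of unipotent automorphisms acting trivially on the three graded pieces of the lower central series. The plan is to show that $\mathcal U$ is abelian modulo inner automorphisms. I would do this by identifying $\mathcal U/(\mathcal U\cap\Inn(N^{\mathbb Q}))$ with a quotient of a $\mathbb Q$-vector space of derivations, and then reading off the action of conjugation directly.

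\textbf{Step 1: identify $\mathcal U$ with derivations.} Any $\phi\in\mathcal U$ has the form $\phi=I+\delta$ with $\delta$ strictly lowering the filtration $\mathfrak n\supset\gamma_2\supset\gamma_3\supset0$. Concretely, $\delta$ maps $\mathfrak n\to\gamma_2$, $\gamma_2\to\gamma_3$ and $\gamma_3\to0$. Since $\mathfrak n$ has class $3$, $\delta^3=0$, so $\log\phi=\delta-\delta^2/2$ is a nilpotent derivation that also lowers the filtration. The set $\mathcal D$ of derivations lowering the filtration in this way is a Lie algebra. Its commutator $[\mathcal D,\mathcal D]$ sends $\mathfrak n$ into $\gamma_3$ and kills $\gamma_2$.

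\textbf{Step 2: show commutators of $\mathcal U$ are inner.} This is the main obstacle. A commutator $[\phi,\psi]$ in $\mathcal U$ is $I+\eta$, where $\eta$ is a derivation sending $\mathfrak n\to\gamma_3=\langle c\rangle$ and killing $\gamma_2$. Such an $\eta$ is a linear functional $f$ on $\mathfrak n/\gamma_2$ with values in $\mathbb Q c$, subject to the condition $f([x_i,x_j])=0$, which holds automatically. Inner derivations $\operatorname{ad}(w)$ with $w\in\gamma_2$ give exactly the functionals $x_i\mapsto (Dw)_i c$. Since $D$ is invertible over $\mathbb Q$ (its determinant is $2$), every such $\eta$ is inner. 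This is exactly where $\det D\neq0$ is used. Hence $[\mathcal U,\mathcal U]\subseteq\Inn(N^{\mathbb Q})$, and $\mathcal V$ is abelian.

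\textbf{Step 3: the $\mathbb Q$-structure on $\mathcal V$.} Define $\mathcal V=\mathcal D/(\mathcal D\cap\operatorname{ad}\mathfrak n+[\mathcal D,\mathcal D])$. By Baker--Campbell--Hausdorff, $\exp$ induces a group isomorphism from this additive quotient onto $\mathcal U/(\mathcal U\cap\Inn)$. The BCH correction terms lie in $[\mathcal D,\mathcal D]$, which by Step 2 consists of inner derivations. Also $\mathcal D\cap\operatorname{ad}\mathfrak n$ corresponds to $\mathcal U\cap\Inn$, since $\exp\operatorname{ad}(y)=\operatorname{Inn}(\exp y)$. Divisibility and torsion-freeness are then inherited from the quotient vector space, and scalar multiplication is given by $\phi^{r}=\exp(r\log\phi)$.

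\textbf{Step 4: the sign.} Let $\sigma$ act as $-I$, $I$, $-I$ on the three graded pieces. For $\delta\in\mathcal D$, consider the conjugate $\sigma\delta\sigma^{-1}$. Modulo $[\mathcal D,\mathcal D]$ and inner derivations, $\delta$ is determined by its graded components. These are the map $\mathfrak n/\gamma_2\to\gamma_2/\gamma_3$, which has degree $1$, and the map $\gamma_2/\gamma_3\to\gamma_3$, which also has degree $1$. The component $\mathfrak n\to\gamma_3$ of degree $2$ is inner by the argument of Step 2, since it is determined up to inner derivations by a functional.

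On each degree-$1$ component, $\sigma$ acts with eigenvalues $(-1)$ in the source and target pairs $(-1\to+1)$ and $(+1\to-1)$, so conjugation multiplies each component by $-1$. The remaining ambiguities from the non-graded part of $\sigma$ are absorbed into $[\mathcal D,\mathcal D]$ or into inner derivations. To handle this cleanly, I would first replace $\sigma$ by its graded (semisimple) part modulo $\mathcal U$. Conjugation by an element of $\mathcal U$ acts trivially on $\mathcal V$ because $\mathcal V$ is abelian, so this replacement does not change the action. Hence conjugation by $\sigma$ is $-1$ on $\mathcal V$.
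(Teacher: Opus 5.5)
Your proof is correct, but it takes a different route from the paper.

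\textbf{The paper's route.} The paper never leaves the group. It writes $\phi(X_i)=X_iA_iC^{r_i}$ and $\phi(Z_k)=Z_kC^{t_k}$, and checks by direct collection that composition adds the parameters $(a_{ki},t_k)$. It then discards the $r_i$ as central automorphisms, which are inner in $N^{\mathbb Q}$ because $D$ is invertible over $\mathbb Q$. Finally it reads off that conjugation by $\sigma$ negates every $a$- and $t$-parameter.

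\textbf{How the two match up.} Your degree-one components of $\log\phi$ are exactly the paper's $(a,t)$-parameters. Your degree-two derivations $\operatorname{ad}(w)$, $w\in\gamma_2$, are exactly its $r$-part. So the decisive input, $\det D\neq 0$, is the same in both arguments; what differs is the packaging.

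\textbf{What the derivation picture buys you.} Abelianness of $\mathcal V$ comes out first, and conceptually, from $[\mathcal D,\mathcal D]\subseteq\operatorname{ad}(\gamma_2)$. The $\mathbb Q$-structure then comes from the identification $\mathcal V\cong\mathcal D/\operatorname{ad}\mathfrak n$. Being a derivation is a linear condition, so you avoid the paper's rather informal claim that every solution of its linear equations defines an automorphism of $N^{\mathbb Q}$.

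\textbf{What it costs.} You rely on the Mal'cev correspondence between automorphisms of $N^{\mathbb Q}$ and of $\mathfrak n$. You also rely on the fact that the logarithm of a unipotent automorphism is a derivation.

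\textbf{Small points to tighten.}
\begin{enumerate}
\item The summand $[\mathcal D,\mathcal D]$ in your quotient is redundant, since it already lies in $\operatorname{ad}\mathfrak n$.
\item In the BCH step, say explicitly that $\tfrac12[\delta,\epsilon]$ is central in $\mathcal D$. That is what lets $\exp(\tfrac12[\delta,\epsilon])$ split off as an inner factor.
\item ``Graded part'' of $\sigma$ should mean the map $v\mapsto(-1)^{\deg v}v$ for the grading $\langle x_i\rangle\oplus\langle z_k\rangle\oplus\langle c\rangle$. This is an automorphism because the bracket is homogeneous. Calling it the ``semisimple part'' is imprecise.
\item The replacement of $\sigma$ by its graded part is valid but unnecessary. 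The map $\delta\mapsto\operatorname{gr}\delta$ has kernel inside $\operatorname{ad}\mathfrak n$, and $\operatorname{gr}(\sigma\delta\sigma^{-1})=\operatorname{gr}\sigma\circ\operatorname{gr}\delta\circ(\operatorname{gr}\sigma)^{-1}=-\operatorname{gr}\delta$. This gives $\sigma\delta\sigma^{-1}\equiv-\delta$ directly, and is exactly what the paper's phrase ``modulo lower terms'' amounts to.
\end{enumerate}
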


\begin{proof}
Take $\phi\in\mathcal U$. Since $\phi$ is the identity on the three quotients,
it cannot change the image of $X_i$ modulo $\gamma_2$, it cannot change the
image of $Z_k$ modulo $\gamma_3$, and it must fix $C$. Hence we can write
\[
\phi(X_i)=X_iA_iC^{r_i},
\qquad
\phi(Z_k)=Z_kC^{t_k},
\qquad
\phi(C)=C,
\]
where $r_i,t_k\in\mathbb Q$ and
$A_i=Z_0^{a_{0i}}\cdots Z_5^{a_{5i}}$ is a rational word in the $Z_k$'s.

We now see what conditions the numbers $a_{ki}$ and $t_k$ must satisfy. Let
$e_k=(i,j)$ be an edge, with $i<j$. Since $[X_i,X_j]=Z_kC^{-1}$, applying
$\phi$ gives
$\phi([X_i,X_j])=\phi(Z_kC^{-1})=Z_kC^{t_k-1}$. On the other hand,
\[
[\phi(X_i),\phi(X_j)]
=
[X_iA_iC^{r_i},X_jA_jC^{r_j}].
\]
The powers of $C$ do not matter in this commutator, because $C$ is central.
Also, the $A_i$'s are made only from the $Z_k$'s, and the $Z_k$'s commute
with each other. Thus the only extra central terms come from moving $X_i$
past $A_j$ and moving $A_i$ past $X_j$. If $a_i=(a_{0i},\ldots,a_{5i})^{\mathsf t}$,
then these two contributions are $(Da_j)_i$ and $-(Da_i)_j$. Therefore the
edge relation gives
\[
t_k=(Da_j)_i-(Da_i)_j.
\]

If $\{i,j\}$ is not an edge, then $[X_i,X_j]=1$. Applying the same computation
gives
\[
0=(Da_j)_i-(Da_i)_j.
\]
Thus all the conditions on the numbers $a_{ki}$ and $t_k$ are linear
equations over $\mathbb Q$.

Conversely, if rational numbers $a_{ki}$ and $t_k$ satisfy these equations,
then the formulas above preserve all relations between the generators. Hence
they define an automorphism of $N^{\mathbb Q}$. The inverse is obtained by
using the opposite parameters for the $Z$-parts and for the changes
$Z_k\mapsto Z_kC^{t_k}$, possibly with an additional central change on the
$X_i$'s. This additional central change is inner in $N^{\mathbb Q}$, as
explained below. Therefore the possible pairs $(a_{ki},t_k)$
form a vector space over $\mathbb Q$.

Now look at composition. If $\phi$ has parameters $(a_{ki},t_k)$ and $\psi$
has parameters $(a'_{ki},t'_k)$, then $\phi\psi$ has parameters
$(a_{ki}+a'_{ki},t_k+t'_k)$. Indeed, the $Z$-part added to $X_i$ is just the
product of the two $Z$-parts, and the central change on $Z_k$ is
$C^{t_k}C^{t'_k}=C^{t_k+t'_k}$. Thus the group law on these parameters is
addition.

The remaining numbers $r_i$ describe maps of the form $X_i\mapsto X_iC^{r_i}$.
These do not affect the equations above. They are also inner in
$N^{\mathbb Q}$. Indeed, since $D$ is invertible over $\mathbb Q$, for any
$r=(r_0,\ldots,r_5)^{\mathsf t}$ we can choose $w\in\mathbb Q^6$ with
$Dw=r$. Then conjugation by $Z_0^{-w_0}\cdots Z_5^{-w_5}$ sends
$X_i$ to $X_iC^{r_i}$ and fixes all $Z_k$ and $C$. So the $r_i$'s disappear
after quotienting by inner automorphisms.

Inner automorphisms themselves also give a linear subspace of the same
parameter space: conjugating by a rational normal word gives parameters
which depend linearly on the exponents of that word. Therefore, after
quotienting by inner automorphisms, we still have a vector space over
$\mathbb Q$. This proves that $\mathcal V$ is a $\mathbb Q$-vector space.

Now let $\sigma$ be an automorphism which acts as $-I$ on the first quotient,
as $I$ on the second quotient, and as $-I$ on the third quotient. We compute
what conjugation by $\sigma$ does to the parameters of $\phi$.

First consider a $Z$-part in the image of some $X_i$. For example, suppose a
term $Z_k^a$ occurs in $A_i$. Since $\sigma^{-1}$ changes $X_i$ to
$X_i^{-1}$ modulo lower terms, applying $\phi$ changes the sign of this
$Z_k$-exponent. Then $\sigma$ fixes the $Z_k$-direction, because it acts as
$I$ on the second quotient. Hence the exponent $a$ becomes $-a$.

Next consider the central change on $Z_k$. If $\phi(Z_k)=Z_kC^b$, then
$\sigma^{-1}$ fixes the $Z_k$-direction, while $\sigma$ sends $C$ to
$C^{-1}$. Hence the exponent $b$ becomes $-b$.

Finally, a direct central change $X_i\mapsto X_iC^{r_i}$ keeps the same sign,
because both the $X_i$-direction and the $C$-direction get a minus sign. But
this part is inner in $N^{\mathbb Q}$, as shown above, so it is zero in
$\mathcal V$.

Thus conjugation by $\sigma$ sends every remaining parameter to its negative.
Therefore it acts on $\mathcal V$ as multiplication by $-1$.
\end{proof}

\begin{proposition}\label{prop:normalizer}
The subgroup generated by $[\alpha]$ is self-normalizing in $\Out(N)$:
\[
\operatorname{N}_{\Out(N)}(\langle[\alpha]\rangle)
=
\langle[\alpha]\rangle
\cong C_4.
\]
Moreover $[\alpha]$ is not conjugate to $[\alpha]^{-1}$ in $\Out(N)$.
\end{proposition}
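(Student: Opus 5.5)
The plan is to split $\Out(N)$ using the sign character from \cref{lem:graded-rigidity}, and then to handle the kernel of that character by linearizing inside the $\mathbb Q$-vector space $\mathcal V$ of \cref{lem:unipotent-vector-space}.

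First, \cref{lem:graded-rigidity} gives a homomorphism $\varepsilon:\Aut(N)\to\{\pm1\}$. Inner automorphisms act trivially on all three quotients, so $\varepsilon$ factors through $\Out(N)$. Its kernel $\Aut_0(N)$ is exactly the group of automorphisms acting as the identity on $N/U$, $U/K$ and $K$. Since $\varepsilon(\alpha)=-1$, every class in $\Out(N)$ is of the form $[\beta]$ or $[\beta][\alpha]$ with $\beta\in\Aut_0(N)$. So it suffices to show that if $\beta\in\Aut_0(N)$ and $[\beta]$ normalizes $\langle[\alpha]\rangle$, then $[\beta]\in\langle[\alpha]^2\rangle$.

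Next I pass to rational completions. By Mal'cev theory, or concretely because each automorphism is given by polynomial formulas in the normal-form coordinates, every automorphism of $N$ extends uniquely to $N^{\mathbb Q}$. Restriction gives $\Aut_0(N)\subseteq\mathcal U$, so we get a homomorphism $\pi:\Aut_0(N)\to\mathcal V$. Its kernel is $\Aut_0(N)\cap\Inn(N^{\mathbb Q})$, which lies in $\Inn_{\mathbb Q}(N)$. Now suppose $[\beta]$ normalizes $\langle[\alpha]\rangle$. Since $[\beta]$ normalizes a subgroup of order $4$ generated by $[\alpha]$, we have $[\beta][\alpha][\beta]^{-1}\in\{[\alpha],[\alpha]^{-1}\}$; the other generators are excluded because $[\alpha]^2$ and $1$ have order less than $4$. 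Hence $\alpha\beta^{-1}\alpha^{-1}\beta$ lies in $\Inn(N)$ or in $\alpha^{-2}\Inn(N)\cdot$ (up to an inner factor). Both $\alpha^{2}$ and inner automorphisms of $N$ lie in $\Inn_{\mathbb Q}(N)$ by \cref{lem:saturation}, so in either case the element $\alpha\beta^{-1}\alpha^{-1}\cdot\beta\in\Aut_0(N)$ has trivial image in $\mathcal V$. By \cref{lem:unipotent-vector-space}, conjugation by $\alpha$ acts on $\mathcal V$ as $-1$. Therefore
\[
\pi(\alpha\beta^{-1}\alpha^{-1}\beta)=-(-\pi(\beta))+\pi(\beta)=2\pi(\beta).
\]
So $2\pi(\beta)=0$, and since $\mathcal V$ is a $\mathbb Q$-vector space, $\pi(\beta)=0$. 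Thus $\beta\in\Inn_{\mathbb Q}(N)$, and \cref{lem:saturation} gives $[\beta]\in\langle[\alpha]^2\rangle$. Combining with the first step, the normalizer is $\langle[\alpha]\rangle$, which is cyclic of order $4$ by \cref{prop:alpha-order}.

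For the last claim, suppose $[\gamma][\alpha][\gamma]^{-1}=[\alpha]^{-1}$. Then $[\gamma]$ normalizes $\langle[\alpha]\rangle$, so $[\gamma]\in\langle[\alpha]\rangle$, which is abelian. This gives $[\alpha]=[\alpha]^{-1}$, so $[\alpha]$ has order at most $2$, contradicting \cref{prop:alpha-order}.

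The step I expect to need the most care is the linearization. One must check that the extension $\Aut_0(N)\to\mathcal U$ is well defined and that the induced map to $\mathcal V$ is a group homomorphism. One must also check that the sign action of \cref{lem:unipotent-vector-space} applies to the extension of $\alpha$, and that it is compatible with the action of conjugation on classes modulo $\Inn(N^{\mathbb Q})$. I would verify these directly from the parameter description $(a_{ki},t_k)$ given in the proof of \cref{lem:unipotent-vector-space}.
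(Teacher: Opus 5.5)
Your argument is the paper's proof almost step for step. You split off the sign using \cref{lem:graded-rigidity}, linearize the unipotent part in $\mathcal V$ (where conjugation by $\alpha$ acts as $-1$), and finish with \cref{lem:saturation}. Your explicit computation $\pi(\alpha\beta^{-1}\alpha^{-1}\beta)=2\pi(\beta)$ is a cleaner version of the paper's remark that $[\alpha]$ has order $2$ in $\Out(N^{\mathbb Q})$.

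The problem is the first step, which you and the paper both take from \cref{lem:graded-rigidity}: that lemma is false. Vertex $0$ is a leaf attached only to $2$. So $x_0$ brackets nontrivially only with $x_2$ and $z_0$, and $z_0$ brackets nontrivially only with $x_0$ and $x_2$. Hence the linear map $x_0\mapsto -x_0$, $x_2\mapsto x_2-2x_0$, $z_0\mapsto -z_0$, fixing all other basis vectors, is an automorphism of $\mathfrak n$. The only brackets to check are $[-x_0,x_2-2x_0]=-z_0$, $[-x_0,-z_0]=c$ and $[x_2-2x_0,-z_0]=-c+2c=c$. On the generators of $N$ this is the involution
\[
X_0\mapsto X_0^{-1},\qquad X_2\mapsto X_2X_0^{-2}Z_0^{-1}C^{-1},\qquad Z_0\mapsto Z_0^{-1},
\]
fixing all other generators. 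It therefore lies in $\Aut(N)$, yet it acts on $N/U$ as neither $I_6$ nor $-I_6$. The error in the lemma's proof is that the support table is transposed. The non-edge equations allow $x_0$ to occur in $Sx_1,Sx_2,Sx_4$, and $S^{\mathsf t}DT=\lambda D$ then only gives $s_{02}=d_0-d_2$, not $d_0=d_2$. So your reduction ``every class is $[\beta]$ or $[\beta][\alpha]$ with $\beta\in\Aut_0(N)$'' is unjustified.

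This step cannot be repaired, because the proposition itself is false. Compose the involution above with the unipotent automorphism $X_0\mapsto X_0Z_1$, $X_1\mapsto X_1Z_0$, $X_2\mapsto X_2Z_0Z_1$ (one checks the defining relations directly). This gives $\beta\in\Aut(N)$ with
\[
\beta(X_0)=X_0^{-1}Z_1,\quad \beta(X_1)=X_1Z_0^{-1},\quad \beta(X_2)=X_2X_0^{-2}Z_0^{-2}Z_1C^{-1},\quad \beta(Z_0)=Z_0^{-1},
\]
and all other generators fixed. Checking on generators shows $\beta\alpha=\alpha\beta$ exactly. Both composites send $X_0\mapsto X_0$, $X_1\mapsto X_1^{-1}Z_3$, $X_4\mapsto X_4^{-1}Z_3$, $Z_0\mapsto Z_0^{-1}$, $Z_4\mapsto Z_4C$ and $C\mapsto C^{-1}$. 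In the Lie algebra, both send $x_2\mapsto 2x_0-x_2-z_1+z_3-\tfrac{3}{2}c$.

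Thus $[\beta]$ centralizes $[\alpha]$ but is not in $\langle[\alpha]\rangle$, since it is not $\pm I_6$ on $N/U$. In fact $t\mapsto t$, $n\mapsto\beta(n)$ then defines a non-inner automorphism of $G$. Your linearization in $\mathcal V$ is sound, but it only controls the unipotent part. What breaks the statement are the extra graded symmetries coming from the leaves $0$ and $5$.
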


\begin{proof}
Let $[\beta]$ normalize $\langle[\alpha]\rangle$. By
\cref{lem:graded-rigidity}, the automorphism $\beta$ acts on the three
quotients either as $(I,I,I)$ or as $(-I,I,-I)$. Since $\alpha$ acts as
$(-I,I,-I)$, replacing $\beta$ by $\beta\alpha^{-1}$ if necessary lets us
assume that $\beta$ acts as the identity on the three quotients.

Now work in $N^{\mathbb Q}$. There, $\alpha^2$ is conjugation by
$R^{-1/2}$, so the image of $[\alpha]$ in $\Out(N^{\mathbb Q})$ has order
$2$. Since $[\beta]$ normalizes the subgroup generated by $[\alpha]$, its
image must commute with the image of $[\alpha]$ in $\Out(N^{\mathbb Q})$.

Let $v$ be the class of $\beta$ in $\mathcal V$. By the previous lemma,
conjugation by $\alpha$ sends $v$ to $-v$. But $\beta$ commutes with
$\alpha$ in $\Out(N^{\mathbb Q})$, so the same class must also be $v$. Hence
$v=-v$. Since $\mathcal V$ is a vector space over $\mathbb Q$, this gives
$v=0$. Therefore $\beta$ is inner in $N^{\mathbb Q}$.

By \cref{lem:saturation}, an automorphism of $N$ which is inner in
$N^{\mathbb Q}$ represents, modulo $\operatorname{Inn}(N)$, either the
trivial class or the class $[\alpha]^2$. Thus, after undoing the possible
multiplication by $\alpha^{-1}$ at the start, we get
$[\beta]\in\langle[\alpha]\rangle$.

The reverse inclusion is automatic: every power of $[\alpha]$ normalizes the
subgroup generated by $[\alpha]$. Hence the normalizer is exactly
$\langle[\alpha]\rangle$.

Finally, suppose that some element of $\Out(N)$ conjugated $[\alpha]$ to
$[\alpha]^{-1}$. Then that element would normalize
$\langle[\alpha]\rangle$, so it would have to lie in
$\langle[\alpha]\rangle$. But this subgroup is cyclic, hence its elements
commute with $[\alpha]$ and cannot send it to $[\alpha]^{-1}$. Since
$[\alpha]$ has order $4$, we have $[\alpha]\neq[\alpha]^{-1}$. Therefore
$[\alpha]$ is not conjugate to $[\alpha]^{-1}$.
\end{proof}

\section{The proof of the main theorem}\label{sec:mapping-torus}

Recall that
\[
G=\langle t\rangle\ltimes_{\alpha} N,
\qquad
t^{-1}nt=\alpha(n)\quad(n\in N).
\]
On $N/U$, the automorphism $\alpha$ acts as $-I$. Hence, in the
abelianization of~$G$, we get $x_i=-x_i$ for each $i$. Equivalently,
$2x_i=0$. The elements coming from $U$ already vanish in $N_{\mathrm{ab}}$,
because $U=[N,N]$. Therefore
\[
G_{\mathrm{ab}}\cong \mathbb Z\oplus(\mathbb Z/2\mathbb Z)^6.
\]
The infinite cyclic factor is generated by the image of $t$.

Define
\[
\ell:G\longrightarrow \mathbb Z,
\qquad
\ell(t)=1,
\qquad
\ell(N)=0.
\]
Any homomorphism from $G$ onto $\mathbb Z$ must kill the finite
$(\mathbb Z/2\mathbb Z)^6$ part of $G_{\mathrm{ab}}$, and must send the
generator of the free $\mathbb Z$-part to either $1$ or $-1$. Thus the only
surjective homomorphisms $G\to\mathbb Z$ are $\ell$ and $-\ell$. Hence
\[
N=\ker \ell
\]
is preserved by every automorphism of $G$.

\medskip

Set
\[
S=R^{-1}t^4.
\]
We claim first that $S$ commutes with every element of $N$. Indeed,
$t^4$ acts on $N$ as $\alpha^{-4}$. Since $\alpha^4=\operatorname{Inn}(R^{-1})$,
we have $\alpha^{-4}=\operatorname{Inn}(R)$. Thus, for $n\in N$,
$t^4nt^{-4}=RnR^{-1}$. Therefore
$SnS^{-1}=R^{-1}(t^4nt^{-4})R=R^{-1}(RnR^{-1})R=n$.

We shall also use the following two formulas. Since $t^{-1}Rt=\alpha(R)=RC$
and $t^{-1}Ct=C^{-1}$, we get
\[
t^{-1}St=C^{-1}S,
\qquad
t^{-1}Ct=C^{-1}.
\]

\begin{lemma}\label{lem:centralizer}
We have $C_G(N)=\langle C,S\rangle$.
Every automorphism of $G$ which fixes $N$ pointwise is inner.
\end{lemma}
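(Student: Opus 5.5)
The plan is to compute $C_G(N)$ directly from the semidirect product structure, using that $[\alpha]$ has order exactly $4$ in $\Out(N)$. I will then identify the automorphisms fixing $N$ pointwise by the element they attach to $t$, and show that conjugation by powers of $S$ realizes all of them.

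\emph{Centralizer.} Every $g\in G$ can be written uniquely as $g=t^mn$ with $m\in\Z$ and $n\in N$. Since conjugation by $t^{-1}$ is $\alpha$, conjugation by $g$ acts on $N$ as a power $\alpha^{\mp m}$ composed with an inner automorphism of $N$.
\begin{itemize}[leftmargin=2em]
\item If $g\in C_G(N)$, this composite is the identity, so $[\alpha]^m=1$ in $\Out(N)$. By \cref{prop:alpha-order} this forces $4\mid m$; write $m=4k$.
\item The element $S=R^{-1}t^4$ centralizes $N$ and satisfies $\ell(S)=4$. Hence $g'=S^{-k}g$ lies in $N$ and still centralizes $N$.
\item Then $g'\in Z(N)=\langle C\rangle$, so $g\in\langle C,S\rangle$.
\end{itemize}
Conversely $C$ and $S$ both centralize $N$, which gives $C_G(N)=\langle C,S\rangle$. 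The only genuine input here is the exact order $4$; the rest is bookkeeping with $\ell$.

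\emph{Automorphisms fixing $N$ pointwise.} Let $\varphi\in\Aut(G)$ fix $N$ pointwise. Applying $\varphi$ to $t^{-1}nt=\alpha(n)$ gives
\[
\varphi(t)^{-1}\,n\,\varphi(t)=\alpha(n)=t^{-1}nt\qquad(n\in N).
\]
Hence $\varphi(t)t^{-1}\in C_G(N)$, and by the first part $\varphi(t)=C^aS^bt$ for some $a,b\in\Z$.

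We know $\varphi$ preserves $N=\ker\ell$ and the only surjections $G\to\Z$ are $\pm\ell$. So $\ell\circ\varphi=\pm\ell$, and $\ell(\varphi(t))=4b+1$ must equal $\pm1$. The value $-1$ is impossible for an integer $b$, so $b=0$ and $\varphi(t)=C^at$.

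\emph{Realizing these maps as inner.} I use the two formulas $t^{-1}St=C^{-1}S$ and $t^{-1}Ct=C^{-1}$ stated above.
\begin{itemize}[leftmargin=2em]
\item The second formula gives $tC=C^{-1}t$.
\item The first gives $S^{-1}tS=t\,(t^{-1}S^{-1}t)\,S=t\,S^{-1}C\,S=tC=C^{-1}t$.
\end{itemize}
So conjugation by $S$ (that is, $x\mapsto S^{-1}xS$) fixes $N$ pointwise and sends $t\mapsto C^{-1}t$. Its powers send $t\mapsto C^{-j}t$ for every $j\in\Z$. Taking $j=-a$ yields an inner automorphism that agrees with $\varphi$ on $N$ and on $t$, hence everywhere.

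The main obstacle is the sign and direction bookkeeping in this last step. Conjugation by $C$ alone only produces even shifts $t\mapsto C^{\pm2}t$, so it is essential to use $S$, whose conjugation shifts by a single power of $C$.
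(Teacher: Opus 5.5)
Your proof is correct and follows the same outline as the paper. The exact order $4$ of $[\alpha]$ pins down $C_G(N)$, an automorphism fixing $N$ pointwise must send $t$ to $C^at$, and powers of conjugation by $S$ realize all such maps.

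The differences are minor. You land in $Z(N)$ by multiplying by $S^{-k}$, where the paper uses $\alpha^{-4r}=\Inn(R^r)$ and commutes $t^4$ past $R$. You also exclude $\varphi(t)\in Nt^{-1}$ by noting $\varphi(t)t^{-1}\in C_G(N)$ and $\ell(C^aS^bt)=4b+1\neq -1$, whereas the paper argues directly that this case would make $\alpha^2$ inner.
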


\begin{proof}
Take an element $nt^k\in G$, with $n\in N$. Its conjugation action on $N$ is
the action of $n$, followed by the action of $t^k$. Since $t^{-1}$ acts as
$\alpha$, the element $t^k$ acts as $\alpha^{-k}$. Therefore the action of
$nt^k$ on $N$ is, modulo inner automorphisms, the class $[\alpha]^{-k}$.

If $nt^k$ commutes with every element of $N$, this action is trivial. Hence
$[\alpha]^{-k}=1$ in $\operatorname{Out}(N)$. Since $[\alpha]$ has order $4$,
we get $4\mid k$. Write $k=4r$.

Now $t^{4r}$ acts on $N$ as $\alpha^{-4r}=\operatorname{Inn}(R^r)$. Hence
$nt^{4r}$ acts on $N$ as conjugation by $nR^r$. This action is trivial
exactly when $nR^r\in Z(N)=\langle C\rangle$. So $n=C^jR^{-r}$ for some
$j\in\mathbb Z$. Since $t^4$ commutes with $R$, we get
$nt^{4r}=C^jR^{-r}t^{4r}=C^jS^r$. Thus every element commuting with $N$
lies in $\langle C,S\rangle$. The reverse inclusion is clear from the
definition of $S$ and from $C\in Z(N)$.

Now let $\varphi$ be an automorphism of $G$ which fixes $N$ pointwise. Its
image on $G/N\cong\mathbb Z$ cannot send $tN$ to $-tN$, because that would
replace the action $\alpha$ by $\alpha^{-1}$, and $\alpha^2$ is not inner.
Hence $\varphi(t)=nt$ for some $n\in N$.

Apply $\varphi$ to the relation $t^{-1}ut=\alpha(u)$. Since $\varphi$ fixes
$u$ and $\alpha(u)$, we get $(nt)^{-1}u(nt)=\alpha(u)$ for every $u\in N$.
This says that conjugation by $n$ does not change the action on $N$, so
$n\in Z(N)=\langle C\rangle$. Thus $\varphi(t)=C^jt$.

Finally, from $t^{-1}St=C^{-1}S$ we get $StS^{-1}=Ct$. Therefore the map
which fixes $N$ and sends $t$ to $Ct$ is conjugation by $S$. Its $j$-th power
sends $t$ to $C^jt$. Hence every automorphism fixing $N$ pointwise is inner.
\end{proof}

\begin{proposition}\label{prop:center-G}
The centre of $G$ is $Z(G)=\langle CS^{-2}\rangle\cong\mathbb Z$, and \hbox{$Z(G)\cap G'=1$.}
\end{proposition}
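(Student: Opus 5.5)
Since $Z(G)\subseteq C_G(N)$, \cref{lem:centralizer} lets us write every central element as $g=C^aS^b$ with $a,b\in\mathbb Z$. The plan is to work out which of these elements also commute with $t$, using the formulas $t^{-1}St=C^{-1}S$ and $t^{-1}Ct=C^{-1}$. The group $\langle C,S\rangle$ is abelian, because $S$ centralizes $N\ni C$. Hence
\[
t^{-1}(C^aS^b)t=C^{-a}C^{-b}S^b=C^{-a-b}S^b .
\]
This equals $C^aS^b$ exactly when $C^{2a+b}=1$, that is, when $b=-2a$. To turn this into a genuine equivalence I first check that $\langle C,S\rangle$ is free abelian on $C$ and $S$. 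If $C^aS^b=1$, then projecting to $G/N$ gives $4b=0$, so $b=0$; then $C^a=1$ forces $a=0$. It follows that $Z(G)=\{C^aS^{-2a}\}=\langle CS^{-2}\rangle$, and this group is infinite cyclic because $CS^{-2}$ maps to $t^{-8}N\neq N$ in $G/N$.

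Next I show $Z(G)\cap G'=1$. The natural tool is the abelianization $G_{\ab}\cong\mathbb Z\oplus(\mathbb Z/2\mathbb Z)^6$ computed above, whose free factor is generated by the image of $t$; equivalently, I use the homomorphism $\ell\colon G\to\mathbb Z$, which vanishes on $G'$. Since $C\in N$ and $R\in N$, we have $\ell(C)=0$ and $\ell(S)=\ell(R^{-1}t^4)=4$. Therefore $\ell\bigl((CS^{-2})^m\bigr)=-8m$. If $(CS^{-2})^m\in G'$, then $\ell$ kills it, so $m=0$.

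Everything here is short, and the one step needing real care is the freeness of $\langle C,S\rangle$, i.e. that $b=-2a$ is forced rather than only sufficient. That freeness comes directly from $\ell$ and the torsion-freeness of $\langle C\rangle$. I also need to be careful with the sign conventions in $t^{-1}St=C^{-1}S$ when commuting $C$ past $S$; since $C$ and $S$ commute, this causes no difficulty.
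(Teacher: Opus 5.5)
Your proof is correct and follows the paper's argument. You use \cref{lem:centralizer} to write a central element as $C^aS^b$, compute $t^{-1}(C^aS^b)t=C^{-a-b}S^b$ to force $b=-2a$, and use $\ell$, which is exactly the paper's projection onto the free $\mathbb Z$-summand of the abelianization, to show $Z(G)\cap G'=1$. Your freeness check on $\langle C,S\rangle$ is harmless but not needed: cancelling $S^b$ already gives $C^{2a+b}=1$, which uses only that $C$ has infinite order.
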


\begin{proof}
A central element must commute with $N$, so by the previous lemma it has the
form $C^jS^r$. We now ask when it also commutes with $t$. Using the formulas
above,
$t^{-1}C^jS^rt=C^{-j-r}S^r$. This equals $C^jS^r$ exactly when
$-j-r=j$, that is, when $r=-2j$. Hence every central element is a power of
$CS^{-2}$.

The element $CS^{-2}$ has nonzero image in the free $\mathbb Z$-summand of
$G_{\mathrm{ab}}$, because $S^{-2}$ contains $t^{-8}$. Therefore no nonzero
power of $CS^{-2}$ lies in $G'$. Hence $Z(G)\cap G'=1$.
\end{proof}

\medskip

\noindent{\it Proof of the Main Theorem}\quad
The group $G$ is torsion-free and has the required normal series by
\cref{prop:normal-series}. Its abelianization was computed above, and its
centre is given by \cref{prop:center-G}. It remains to show that every
automorphism of $G$ is inner.

Let $\Phi\in\operatorname{Aut}(G)$. Since $N$ is preserved by every
automorphism, $\Phi$ restricts to an automorphism $\beta$ of $N$. Also,
on $G/N\cong\mathbb Z$, the automorphism $\Phi$ sends $tN$ either to $tN$ or
to $t^{-1}N$. Hence
$\Phi(t)=nt^\varepsilon$ for some $n\in N$ and some
$\varepsilon\in\{1,-1\}$.

Apply $\Phi$ to the relation $t^{-1}ut=\alpha(u)$. Ignoring inner
automorphisms of $N$, this gives
\[
[\beta][\alpha][\beta]^{-1}=[\alpha]^\varepsilon.
\]
If $\varepsilon=-1$, then $[\beta]$ conjugates $[\alpha]$ to
$[\alpha]^{-1}$, which is impossible by \cref{prop:normalizer}. Therefore
$\varepsilon=1$.

Again by \cref{prop:normalizer}, we have $[\beta]\in\langle[\alpha]\rangle$.
So $[\beta]=[\alpha]^k$ for some $k\in\mathbb Z$. Conjugating $\Phi$ by a
suitable power of $t$ changes its restriction to $N$ by a power of $\alpha$.
After doing this, and then conjugating by a suitable element of $N$, we may
assume that the new automorphism fixes $N$ pointwise.

By \cref{lem:centralizer}, any automorphism which fixes $N$ pointwise is
inner. The two changes we made were also inner automorphisms. Therefore the
original automorphism $\Phi$ was inner. Hence
\[
\operatorname{Aut}(G)=\operatorname{Inn}(G),
\qquad
\operatorname{Out}(G)=1.
\] The statement is proved.

\medskip

In view of our main result, the following question appears natural.

\begin{question}
What is the minimum Hirsch length of a normally poly--$\Z$ group with
trivial outer automorphism group?
\end{question}

\begin{flushleft}
\rule{8cm}{0.4pt}\\
\end{flushleft}

{
\sloppy
\noindent
Mattia Brescia, Ernesto Ingrosso, Marco Trombetti

\noindent 
Dipartimento di Matematica e Applicazioni ``Renato Caccioppoli''

\noindent
Università di Napoli Federico II

\noindent
Complesso Universitario Monte S. Angelo

\noindent
Via Cintia, Napoli (Italy)

\noindent
e-mail: mattia.brescia@unina.it; ernesto.ingrosso2@unina.it\\ \phantom{e-mail:} marco.trombetti@unina.it 

}

\end{document}